\definecolor{orangetwo}{rgb}{0.2117647, 0.3098039, 0.5490196}
\definecolor{freading}{rgb}{ 0.8980392,1.0000000,0.9058824}
\definecolor{gray}{rgb}{0.25,0.25,0.25}
\definecolor{dgray}{rgb}{0.15,0.15,0.15}
\definecolor{white}{rgb}{1,1,1}
\definecolor{theoremback}{rgb}{0.8235294, 0.8627451, 0.9137255}
\definecolor{exampletitle}{rgb}{0.6705882, 0.7568627, 0.8705882}
\newcommand*\marcomment[3][0pt]{%
  \if l#2\reversemarginpar\def\pointer{\rightarrow}%
    \def\stackalignment{r}\fi%
  \if r#2\normalmarginpar\def\pointer{~}%
    \def\stackalignment{l}\fi%
  \marginpar{%
    \topinset{%
      \scalebox{1.1}{\textcolor{orangetwo}{$\pointer$}}}{%
      \belowbaseline[-0.5\baselineskip-#1]{%
        \stackengine%
          {-5pt}%
          {\fcolorbox{orangetwo}{white}{\parbox{4.6cm}%
		  {\vspace{3pt}\footnotesize #3}}}%
          {~\colorbox{white}{Remark}}%
          {O}%
          {l}%
          {F}%
          {F}%
          {S}%
        }%
      }{%
      3ex+#1}{%
      -2ex}%
  }%
}
\newtheoremstyle{nthm}
{3pt}
{3pt}
{\itshape}%body font
{0em}%indent amount
{\bfseries}%theorem head font
{.}%puncuation after theorem head
{.5em}%space after theorem head
{}%theoremheadspec who knows/??
\newtheoremstyle{ndef}
{3pt}
{3pt}
{}%body font
{0em}%indent amount
{\bfseries}%theorem head font
{.}%puncuation after theorem head
{.5em}%space after theorem head
{}%theo
\newtheoremstyle{nrem}
{3pt}
{3pt}
{}%body font
{0em}%indent amount
{\itshape}%theorem head font
{.}%puncuation after theorem head
{.5em}%space after theorem head
{}%theor
\theoremstyle{nthm}
\newmdtheoremenv[linecolor=white,linewidth=4,backgroundcolor=theoremback]{thm}[subsection]{Theorem}
\newtheorem{thm2}[subsection]{Theorem}
\newtheorem{prop}[subsection]{Proposition}
\theoremstyle{ndef}
\newtheorem{defn}[subsection]{Definition}
\theoremstyle{nrem}
\newtheorem{exmp}[subsection]{Example}
\newtheorem{rem}[subsection]{Remark}
\newcommand{\F}{\mathbb{F}}
\newcommand{\Z}{\mathbb{Z}}
\newcommand{\df}[1]{#1}
\newsavebox{\fmbox}
\DeclareMathOperator{\pfb}{pb}
\DeclareMathOperator{\Pgr}{Pgr}
\DeclareMathOperator{\Fpg}{Fpg}
\newcommand{\permterm}{fractal }
\begin{document}
\title{ The Polypermutation Group of an Associative Ring }
\begin{abstract}
     We study permutation polynomials through the device of the \emph{polypermutation group} of an associative ring $R$, denoted by $\Pgr(R)$. We derive some basic properties and compute the cardinality of $\Pgr(\Z/p^k)$ when $p\geq k$. We use this computation to determine the structure of $\Pgr(\Z/p^2)$.
\end{abstract}
\author{Jason K.C. Polak}
\date{\today}
\subjclass[2010]{Primary: 11T06. Secondary: 16P10, 05A05}
\keywords{Permutation group, polynomial, associative ring.}
\maketitle
\tableofcontents

\section{Introduction}
Let $p$ be a prime. Why are $\Z/p\times\Z/p$ and $\Z/p^2$ not isomorphic as rings? It's because the latter has more permutations induced by polynomials! That's what this paper is about---permutations of rings like $\Z/p^k$ induced by polynomials. We start with $R$, an associative ring with identity. We say that a polynomial $f\in R[x]$ is a \df{permutation polynomial} if the induced evaluation function $R\to R$ is bijective. For example, $x^3 + 6x^2 + x\in \Z/9[x]$ permutes $\Z/9$. For finite fields permutation polynomials have been studied extensively (cf. \cite[Chapter 7]{LidlNiederreiter2000}), and some other finite rings have also been considered. Often, research has focused on finding specific classes of permutation polynomials. In this work, we study this subject from the lesser used perspective of group theory.
\begin{defn}
    For a ring $R$, the subset of permutations of $R$ that can be represented by polynomial functions is a monoid, and we define the \df{polypermutation group} $\Pgr(R)$ of $R$ to be the subgroup generated by this monoid in the symmetric group of $R$.
\end{defn}
If $R$ is a finite field, any function $R\to R$ can be represented by a polynomial and so $\Pgr(R)$ is the symmetric group $\Delta_R$ on $R$. In \cite{Ashlock1993} and \cite{CarlitzHayes1972}, the polypermutation group  was calculated for the group ring $R[G]$ where $R$ is a finite field and $G$ is a finite abelian group. Permutation polynomials $\Z/p^k\to\Z/p^k$ have been studied before as in \cite{MR232756,MR1767431,MR764615}. Where indicated, some of the results have been obtained before, but our proofs are different. Also, we believe the presentation we give for the group $\Pgr(\Z/p^2)$ is new.

So what's in this paper? We will start with some basic properties of the polypermutation group in \S\ref{sec:basic_props}, such as the behaviour of $\Pgr(-)$ with respect to products, inverse limits, and ring homomorphisms.  For example, it turns out that $\Pgr(-)$ is a functor from the category of finite rings and surjective homomorphisms to the category of groups. The main section is \S\ref{sec:quotientIntegers}, where we compute the size of  $\Pgr(\Z/p^k)$ for a prime $p$ and an integer $k$ such that $p \geq k$:
\begin{thm2}
    Let $p$ be a prime and $k \geq 2$ be an integer such that $p \geq k$. Then
    \begin{align*}
        |\Pgr(\Z/p^k)| = p![(p-1)p^{(k^2+k-4)/2}]^p.
    \end{align*}
\end{thm2}
This result also appears in \cite{MR764615}. However, we provide a different proof which might be of some interest. Our main result is a  we compute the structure of $\Pgr(\Z/p^2)$:
\begin{thm2}
    Let $p$ be a prime and let the group $(\Z/p)^\times$ act on the group $\Z/p$ by multiplication. Let $\Delta_p$ act on the $p$-fold products $[(\Z/p)^\times]^p$ and $(\Z/p)^p$ via permuting the coordinates. Then there exists an isomorphism
    \begin{align*}
        \Pgr(\Z/p^2)\cong ( (\Z/p)^p\rtimes [(\Z/p)^\times]^p)\rtimes \Delta_p
    \end{align*}
\end{thm2}
Here, the notation $A\rtimes G$ means the semidirect product with $G$ acting on $A$. This group was also studied in \cite{MR71467}, though we believe this presentation as a semidirect product is new.

\section*{Acknowledgements} The author wishes to thank the anonymous reviewers for improving the exposition and comprehensiveness of this paper.

\section{Basic Properties}\label{sec:basic_props}
For any set $X$, we write $\Delta_X$ for the permutation group of $X$ and $\Delta_n$ for the permutation group on $n$ letters. We also write $D_n$ for the dihedral group of the regular $n$-gon if $n \geq 3$, so that $|D_n| = 2n$. By convention we set $D_2 = \Delta_2 \cong \Z/2$ and $D_1 = \{ e\}$.
\begin{prop}\label{thm:behaviourUnderProducts}
    If $R_1$ and $R_2$ are associative rings then
    \begin{align*}
        \Pgr(R_1\times R_2) \cong \Pgr(R_1)\times\Pgr(R_2).
    \end{align*}
\end{prop}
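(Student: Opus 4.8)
The plan is to reduce everything to the coordinate-wise description of polynomials over a product ring. First I would record the canonical isomorphism $(R_1\times R_2)[x]\cong R_1[x]\times R_2[x]$, under which a polynomial $f$ corresponds to a pair $(f_1,f_2)$ with $f_i\in R_i[x]$. Evaluating at a point $(a_1,a_2)$ then gives $f(a_1,a_2)=(f_1(a_1),f_2(a_2))$, so the function $R_1\times R_2\to R_1\times R_2$ induced by $f$ is exactly the coordinate-wise product of the functions induced by $f_1$ and $f_2$. The first structural consequence I would draw from this is that every polynomial permutation of $R_1\times R_2$ already lies in the subgroup $\Delta_{R_1}\times\Delta_{R_2}$ of coordinate-wise permutations inside $\Delta_{R_1\times R_2}$; this is the key restriction, since $\Delta_{R_1\times R_2}$ is in general far larger than this product.

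Next, since $R_1$ and $R_2$ are nonempty, a coordinate-wise map is a bijection if and only if each of its two components is a bijection. Writing $M(R)$ for the monoid of polynomial permutations of $R$, this identifies $M(R_1\times R_2)$ with $M(R_1)\times M(R_2)$ as submonoids of $\Delta_{R_1}\times\Delta_{R_2}$: a pair $(\sigma_1,\sigma_2)$ is induced by a single polynomial over $R_1\times R_2$ precisely when each $\sigma_i$ is induced by a polynomial over $R_i$, and the two choices of $f_1,f_2$ are entirely independent. So far this is bookkeeping around the product decomposition of the polynomial ring.

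It then remains to compare the subgroup generated by $M(R_1)\times M(R_2)$ with $\Pgr(R_1)\times\Pgr(R_2)$, and I expect this to be the one genuinely delicate point. For arbitrary subsets $S_i\subseteq G_i$ one has only $\langle S_1\times S_2\rangle\subseteq\langle S_1\rangle\times\langle S_2\rangle$, and equality can fail (think of the diagonal generated by a single pair of generators in $\Z\times\Z$). The resolution is that each $M(R_i)$ contains the identity permutation, induced by the polynomial $x$. Hence $M(R_1)\times M(R_2)$ contains every element $(\sigma_1,\mathrm{id})$ with $\sigma_1\in M(R_1)$ as well as every $(\mathrm{id},\sigma_2)$ with $\sigma_2\in M(R_2)$; the subgroup these generate contains $\Pgr(R_1)\times\{\mathrm{id}\}$ and $\{\mathrm{id}\}\times\Pgr(R_2)$, whose product is all of $\Pgr(R_1)\times\Pgr(R_2)$. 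The reverse inclusion is immediate because $M(R_1)\times M(R_2)\subseteq\Pgr(R_1)\times\Pgr(R_2)$ and the latter is already a subgroup. Combining these, $\Pgr(R_1\times R_2)=\langle M(R_1\times R_2)\rangle=\Pgr(R_1)\times\Pgr(R_2)$, which is the claimed isomorphism.
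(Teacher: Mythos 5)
Your proof is correct and follows essentially the same route as the paper's, whose entire argument is the observation that a polynomial over $R_1\times R_2$ is a pair $(f_1,f_2)$ with $f_i\in R_i[x]$ acting coordinate-wise. The extra care you take with the generated-subgroup step (using that the identity permutation, induced by $x$, lies in each monoid of polynomial permutations) is a detail the paper leaves implicit, not a different approach.
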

\begin{proof}
    Any polynomial permutation of $R_1\times R_2$ comes from a polynomial with coefficients in $R_1\times R_2$ and so is given by a pair of polynomials $(f_1,f_2)$ with $f_1\in R_1[x]$ and $f_2\in R_2[x]$.
\end{proof}
\begin{prop}\label{thm:projlimits}
    Let $I$ be a directed poset and consider a functor $F$ from $I$ to the category of rings.Write $R_i = F(i)$ for all $i\in I$. Suppose that for each morphism $i\to j$ in $I$, the morphism $R_i\to R_j$ with the corresponding morphism $R_i[x]\to R_j[x]$ sends permutation polynomials to permutation polynomials. Then
    \begin{align*}
        \Pgr(\varprojlim_F R_i) \cong \varprojlim_F \Pgr(R_i).
    \end{align*}
\end{prop}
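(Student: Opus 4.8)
The plan is to construct a natural comparison homomorphism $\Psi \colon \Pgr(\varprojlim_F R_i) \to \varprojlim_F \Pgr(R_i)$ and then prove it is bijective. Write $R = \varprojlim_F R_i$ with projections $\pi_i \colon R \to R_i$ and transition maps $\phi_{ij} \colon R_i \to R_j$. The engine behind everything is the identity $\phi(f(r)) = (\tilde\phi f)(\phi(r))$, valid for any ring homomorphism $\phi \colon R' \to S'$, where $\tilde\phi$ denotes the coefficientwise map $R'[x] \to S'[x]$; in words, evaluation commutes with ring homomorphisms. First I would use this to check that whenever $\phi$ carries permutation polynomials to permutation polynomials, the assignment $\bar f \mapsto \overline{\tilde\phi f}$ descends to a group homomorphism $\phi_\ast$ between the polypermutation groups: the identity above yields the intertwining relation $\phi \circ \bar f = (\phi_\ast \bar f)\circ \phi$, which both forces well-definedness on the generating polynomial permutations and shows the assignment respects composition and inversion. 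Applied to the transition maps this makes $i \mapsto \Pgr(R_i)$ a functor on $I$, so the right-hand side is defined; applied to the projections $\pi_i$ it produces a cone $\Pgr(R) \to \Pgr(R_i)$ compatible with the $\phi_{ij}$, and the universal property of the inverse limit yields $\Psi$. Here one must also verify that the projections carry permutation polynomials to permutation polynomials, which I would deduce from the hypothesis together with surjectivity of the projections (a global permutation polynomial restricts level-by-level, and a surjective self-map of a finite ring is bijective).

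Injectivity I expect to be straightforward. If $\Psi(\sigma)$ is trivial then $(\pi_i)_\ast(\sigma) = \mathrm{id}$ for every $i$, so the intertwining relation collapses to $\pi_i \circ \sigma = \pi_i$; that is, $\sigma(r)$ and $r$ have the same image in every $R_i$. Since $R$ is the inverse limit, an element is determined by its projections, whence $\sigma = \mathrm{id}$. This step uses only the separation property of the limit, not surjectivity of the projections.

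Surjectivity is where the real work lies. Given a compatible family $(\sigma_i) \in \varprojlim_F \Pgr(R_i)$, the compatibility is exactly $\phi_{ij}\circ \sigma_i = \sigma_j \circ \phi_{ij}$ for each transition, and this is precisely what is needed to glue the $\sigma_i$ into a single set map $\sigma \colon R \to R$, $\sigma((r_i)_i) = (\sigma_i(r_i))_i$, which lands in $R$ and is bijective with inverse glued from $(\sigma_i^{-1})$. The obstacle is to show this glued bijection lies in $\Pgr(R)$ rather than merely in $\Delta_R$. The difficulty is that each $\sigma_i$ is only a \emph{product} of polynomial permutations of $R_i$, and the polynomial representatives chosen at different levels need not agree under the transition maps even though the permutations they induce do; so one cannot naively glue polynomials into a single polynomial over $R$.

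I expect this lifting to be the crux, and to be where the hypothesis on the transition maps is used in full force: it should let me choose coherent polynomial representatives across the system whose coefficients assemble to elements of $R = \varprojlim_F R_i$, so that $\sigma$ is visibly induced by a polynomial, or a product of such, over $R$. In the finite setting relevant to the later sections one can instead lean on compactness: each $\Pgr(R_i)$ is finite, the set of admissible representability data at each level is a nonempty finite set mapping compatibly down the system, and the inverse limit of nonempty finite sets is nonempty, producing the required global representative. Once $\sigma \in \Pgr(R)$ is established, the construction gives $\pi_i \circ \sigma = \sigma_i \circ \pi_i$, hence $(\pi_i)_\ast(\sigma) = \sigma_i$ for every $i$ by surjectivity of $\pi_i$, so $\Psi(\sigma) = (\sigma_i)$ and $\Psi$ is surjective, completing the isomorphism.
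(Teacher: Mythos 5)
Your overall strategy---build a comparison map $\Psi\colon \Pgr(\varprojlim R_i)\to\varprojlim\Pgr(R_i)$ and prove it bijective---is genuinely different from the paper's, which never constructs a map out of $\Pgr(\varprojlim R_i)$ at all. The paper verifies the universal property directly: it takes an arbitrary cone of homomorphisms $\varphi_i\colon X\to\Pgr(R_i)$, regards the images $\varphi_i(x)$ as a coefficient-compatible system of permutation polynomials $f_i\in R_i[x]$, glues the coefficients into a single polynomial $f$ over $\varprojlim R_i$, and proves $f$ bijective (injectivity is inherited componentwise; for surjectivity, chosen preimages $b_i$ of a compatible family $(a_i)$ are automatically compatible, since applying a transition map $\alpha$ to $f_i(b_i)=a_i$ gives $f_j(\alpha(b_i))=a_j=f_j(b_j)$ and $f_j$ is injective). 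Your route runs into a problem the paper's route is built to avoid: to define $\Psi$ you need the projections $\pi_i\colon R\to R_i$ to carry permutation polynomials to permutation polynomials, and your justification invokes surjectivity of the $\pi_i$ and finiteness of the $R_i$. Neither is a hypothesis: the proposition concerns arbitrary rings, and projections from an inverse limit over a directed poset need not be surjective---only the transition maps are assumed to respect permutation polynomials. (The same surjectivity caveat even affects your claim that the intertwining relation alone makes $\phi_*$ well defined on permutations.)

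The second gap is the one you flag yourself: surjectivity of $\Psi$ requires lifting a family $(\sigma_i)$ that is compatible only at the level of \emph{permutations} to polynomial data over the limit, and you do not carry this out; you only sketch a compactness argument for finite rings. Even that sketch needs more care than you give it: the set of polynomials representing a fixed $\sigma_i$ is infinite even when $R_i$ is finite, so you must cut down by degree bounds, and to end with an honest polynomial (finitely many coefficients) over $R$ you need those bounds uniform across the system, which does not come for free. To be fair, this lifting problem is exactly the point at which the paper's own proof is cavalier---it treats cone data as if it were coefficient-compatible polynomials rather than merely compatible permutations, a strictly stronger assumption---so you have correctly isolated the real difficulty hiding in the statement. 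But identifying the crux and saying ``I expect this to be where the work lies'' is not the same as doing the work: as written, your argument establishes injectivity of $\Psi$ and nothing more, so it has a genuine gap precisely where the theorem needs the hypothesis in full force.
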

\begin{proof}
   We will show that $\Pgr(\varprojlim_F R_i)$ satisfies the universal property of $\varprojlim_F \Pgr(R_i)$. To this end, let $X$ be a group and suppose we have homomorphisms $\varphi_i:X\to \Pgr(R_i)$ for each $i\in I$ such that the diagram
   \begin{equation*}
   \begin{tikzcd}
       ~ & X\ar[ld,"\varphi_i"']\ar[rd,"\varphi_j"] & ~\\
       \Pgr(R_i)\ar[rr] & ~ & \Pgr(R_j)
   \end{tikzcd}
   \end{equation*}
   commutes whenever there is a map $\Pgr(R_i)\to\Pgr(R_j)$. Thus $f_i = \varphi_i(x)$ is a permutation polynomial in $R_i[x]$ for all $i$ such that $\varphi_j(x)$ is obtained from $\varphi_i(x)$ by applying $R_i\to R_j$ if such a map exists. So, such a system of polynomials defines a polynomial $f = (f_i)$ with coefficients in $\varprojlim R_i$; we need to verify that it defines a bijection $\varprojlim R_i\to\varprojlim R_i$. Since each $f_i$ is an injection, the map $\varprojlim R_i\to\varprojlim R_i$ must certainly be an injection.

   Now suppose that $(a_i)\in \varprojlim R_i$. Since each $f_i$ is surjective, there exists an element $(b_i)\in \prod R_i$ such that $f_i(b_i) = a_i$ for each $i$. Let $\alpha:R_i\to R_j$ be the ring homomorphism in the inverse system, so that $\alpha(f_i) = f_j$ and $\alpha(a_i) = a_j$. Applying $\alpha$ to the equation $f_i(b_i) = a_i$ gives
   \begin{align*}
       f_j(\alpha(b_i)) = a_j,
   \end{align*}
and we already have $f_j(b_j) = a_j$. Since $f_j:R_j\to R_j$ is bijective, $\alpha(b_i) = b_j$ and so $(b_i)\in\varprojlim R_i$, showing that $f$ is also surjective and hence bijective. Thus we get a map $X\to \Pgr(\varprojlim R_i)$, which is a group homomorphism because each $\varphi_i$ is a group homomorphism, and by construction is the unique homomorphism that makes the appropriate diagram commute.
\end{proof}

\begin{rem}Let $R$ be a commutative ring and suppose that $R_1$ and $R_2$ are any two commutative $R$-algebras. Then there does not seem to be an easy way to determine $\Pgr(R_1\otimes_R R_2)$ from $\Pgr(R_1)$ and $\Pgr(R_2)$ as can be seen in the case of $\Z/n\otimes_\Z\Z/m\cong \Z/\gcd(m,n)$.\end{rem}
    The next proposition can be helpful when computing some polypermutation groups by hand.
\begin{prop}\label{thm:noConstantGenerator}
    Let $R$ be a ring such that every translation polynomial $x + r$ is in $\Pgr(R)$ and let $\{ f_i : i\in I\}$ be a set of generators for $\Pgr(R)$ containing all translation polynomials. Then each $f_i$ that is not a translation can be replaced by a polynomial with no constant term.
\end{prop}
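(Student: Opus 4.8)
The plan is to replace each non-translation generator by the polynomial obtained from it by deleting its constant term, exploiting the fact that subtracting a constant is the same as post-composing with a translation, and that all translations are already available as generators by hypothesis.

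First I would fix a generator $f_i$ that is not a translation, set $c_i = f_i(0) \in R$ (its constant term), and define $g_i(x) = f_i(x) - c_i$. Evaluating at $0$ gives $g_i(0) = 0$, so $g_i$ has no constant term, as required. Writing $\tau_r$ for the translation $x\mapsto x+r$, the two polynomials are related by $g_i = \tau_{-c_i}\circ f_i$ and $f_i = \tau_{c_i}\circ g_i$, where $\circ$ denotes composition in $\Delta_R$. In particular $g_i$ is a composite of the permutation $f_i$ with the translation $\tau_{-c_i}$, the latter lying in $\Pgr(R)$ by hypothesis; since $\Pgr(R)$ is closed under composition, $g_i$ is itself a polynomial permutation and hence a legitimate generator.

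Next I would form the new candidate generating set by keeping every translation $\tau_r$ and replacing each non-translation $f_i$ with its counterpart $g_i$. The two relations above then show that the old and new sets generate the same subgroup: each $g_i$ lies in the group generated by the old set because $g_i = \tau_{-c_i}\circ f_i$, and each $f_i$ lies in the group generated by the new set because $f_i = \tau_{c_i}\circ g_i$, while all translations are common to both. Hence the new set is still a generating set for $\Pgr(R)$, which completes the argument.

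The computations here are entirely routine; the only point demanding attention---and it is a minor one---is bookkeeping the direction of composition, so that \emph{subtracting} the constant term corresponds to \emph{post}-composing with a translation (acting on outputs) rather than pre-composing (acting on inputs). Once this convention is fixed, closure of $\Pgr(R)$ under composition makes each step immediate, so I do not anticipate any genuine obstacle beyond getting this orientation right.
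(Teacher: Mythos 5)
Your proof is correct, and it is essentially the paper's argument reflected in a mirror: both kill the constant term by composing with a translation already known to lie in $\Pgr(R)$, but on opposite sides. The paper picks $r$ with $f_i(r)=0$ (such $r$ exists because $f_i$ is a permutation) and replaces $f_i$ by the polynomial $f_i(x+r)$, i.e.\ it \emph{pre}-composes with the translation $x\mapsto x+r$; you instead \emph{post}-compose with $x \mapsto x - f_i(0)$. The difference is small but not purely cosmetic, and your orientation is arguably the better one. First, you need no appeal to surjectivity: the constant $c_i=f_i(0)$ exists for any polynomial, whereas the paper must use that $f_i$ is a permutation to produce a root. Second, the paper's standing setting is an arbitrary associative ring, and there post-composition is more robust: evaluation always commutes with subtracting a constant, so the function induced by the polynomial $f_i(x)-c_i$ is literally the composite of $f_i$ with the translation by $-c_i$. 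By contrast, if $R$ is noncommutative, expanding the polynomial $f_i(x+r)$ (with $x$ central) and evaluating at $b$ need not give $f_i(b+r)$ when $b$ and $r$ fail to commute, so the paper's one-line claim that $f_i(x+r)$ lies in $\Pgr(R)$ and may replace $f_i$ quietly uses commutativity. Both routes establish the same conclusion by the same underlying idea---normalizing generators using the available translations---but your version is marginally more general, and your closing caution about the direction of composition is exactly the right point to flag.
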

\begin{proof}
    Let $f$ be in the generating set for $\Pgr(R)$. Since $f$ is a permutation, $f(r) = 0$ for some $r\in R$. Then $f(x + r)$ is also in $\Pgr(R)$, has no constant term, and may replace $f$ in the generating set.
\end{proof}

Let $R\to S$ be a ring homomorphism. When does the induced map $R[x]\to S[x]$ send permutation polynomials to permutation polynomials? This does not always happen: for example, the homomorphism $\Z/2\to \Z/2[t]$ sends $f(x) = x^2$ to a polynomial that does not induce a permutation, because $t\in \Z/2[t]$ has no square root! The first hint is a result of Rivest.

\begin{thm2}[{\cite{Rivest2001}}] Let $w\geq 2$. A polynomial $f(x) = a_0 + a_1x + \cdots + a_nx^n$ with integer coefficients reduces to a permutation polynomial in $\Z/2^w[x]$ if and only if $a_1$ is odd, $(a_2 + a_4 + \cdots )$ is even, and $(a_3 + a_5 + \cdots)$ is even. 
\end{thm2}
Since Rivest's condition on $f$ is independent of $w$, and given that such a polynomial would also reduce to a permutation polynomial in $\Z/2[x]$, we see that the reduction homomorphisms $\Z/2^k\to \Z/2^\ell$ for $\ell\leq k$ induce group homomorphisms
\begin{align*}
    \Pgr(\Z/2^k)\to \Pgr(\Z/2^\ell)
\end{align*}
which are surjective, again by Rivest's condition. It is easy to verify that when $m \mid n$, the reduction map $\Z/n\to \Z/m$ induces a map $\Pgr(\Z/n)\to\Pgr(\Z/m)$. Both of these facts are part of a more general result, which is easy to see but nevertheless useful.
\begin{prop}\label{thm:redFunctor}
    Let $R$ be a ring and $I$ an ideal of $R$. If $R/I$ is finite then the reduction modulo $I$ of any permutation polynomial $f\in R[x]$ is a permutation polynomial in $R/I[x]$. Hence, there is an induced map
    \begin{align*}
        \Pgr(R)\longrightarrow \Pgr(R/I).
    \end{align*}
\end{prop}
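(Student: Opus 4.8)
The plan is to base everything on a single commutative square relating evaluation on $R$ to evaluation on $R/I$. Write $\pi\colon R\to R/I$ for the reduction map, and for $f=\sum a_ix^i\in R[x]$ write $\bar f=\sum\pi(a_i)x^i\in (R/I)[x]$. Since $\pi$ is a ring homomorphism, $\pi(f(r))=\bar f(\pi(r))$ for every $r\in R$, i.e. $\pi\circ f=\bar f\circ\pi$ as maps $R\to R/I$; this identity is the only computational input and everything else is formal. First I would establish the stated claim that $\bar f$ permutes $R/I$ whenever $f$ permutes $R$. Here the finiteness of $R/I$ does the work: a surjection of a finite set to itself is bijective, so it suffices to check $\bar f$ is surjective. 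Given $s\in R/I$, choose $r\in R$ with $\pi(r)=s$ (as $\pi$ is onto) and then $r'$ with $f(r')=r$ (as $f$ is onto); the square gives $\bar f(\pi(r'))=\pi(f(r'))=\pi(r)=s$. Note this uses only that $f$ is surjective and that $R/I$ is finite, not any finiteness of $R$.

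The point needing genuine care is the word ``Hence.'' Recall $\Pgr(R)$ is not the monoid of permutation polynomials but the \emph{group} it generates inside $\Delta_R$, so I cannot simply send a permutation polynomial to its reduction and call the assignment a homomorphism: I must control inverses, and in fact arbitrary words in the generators and their inverses. The clean way to avoid extending a mere monoid map is to realize the reduction as the restriction of an honest homomorphism between the ambient symmetric groups. Let $P\subseteq\Delta_R$ be the set of permutations $\sigma$ respecting congruence mod $I$ in both directions, i.e. $r\equiv r'\pmod I\iff\sigma(r)\equiv\sigma(r')\pmod I$. One checks directly that $P$ contains the identity and is closed under composition and inverses, so it is a subgroup, and that each $\sigma\in P$ descends to a well-defined bijection of cosets $\bar\sigma\in\Delta_{R/I}$; the resulting map $\rho\colon P\to\Delta_{R/I}$, $\sigma\mapsto\bar\sigma$, is a group homomorphism. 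The commutative square (together with the bijectivity of $\bar f$ from the first step) shows every permutation polynomial $f$ lies in $P$ and satisfies $\rho(f)=\bar f$. Since $P$ is a subgroup containing all the generators of $\Pgr(R)$, it contains $\Pgr(R)$; restricting $\rho$ then yields a group homomorphism $\Pgr(R)\to\Delta_{R/I}$ with no well-definedness to worry about, whose image is generated by the reductions $\bar f$ and hence lands in $\Pgr(R/I)$.

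I expect the only subtle step to be exactly this distinction between the monoid of permutation polynomials and the group it generates, and the consequent temptation to ``extend a monoid homomorphism to the generated group,'' which is not automatic. Embedding the situation into $\rho\colon P\to\Delta_{R/I}$ sidesteps the issue entirely, since word-independence is inherited from the ambient homomorphism. As a sanity check and a conceptual simplification, I would remark that because $R/I$ is finite, the monoid of polynomial permutations of $R/I$ is a submonoid of the finite group $\Delta_{R/I}$ and is therefore already a group (each element has finite order, so its inverse is a power of it); thus $\Pgr(R/I)$ coincides with that monoid, confirming that the image of $\rho|_{\Pgr(R)}$ is exactly the intended target and that no further closure is required.
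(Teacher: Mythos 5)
Your proposal is correct, and its first half coincides exactly with the paper's proof: the reduction $\bar f$ is surjective because $f$ and $\pi\colon R\to R/I$ are, and a surjection from the finite set $R/I$ to itself is a bijection. Where you genuinely go beyond the paper is the word ``Hence'': the paper offers no argument for the induced homomorphism $\Pgr(R)\to\Pgr(R/I)$, while you correctly identify that this step is not purely formal. Since $\Pgr(R)$ is the group \emph{generated} by the monoid of polynomial permutations, and for general $R$ that monoid need not be a group (the inverse of a polynomial bijection need not be polynomial, e.g. $x\mapsto x^3$ on $\R$), one cannot simply send generators to their reductions and invoke ``extension to the generated group''; word-independence must be supplied. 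Your device of the subgroup $P\subseteq\Delta_R$ of permutations preserving congruence mod $I$ in both directions, with the ambient homomorphism $\rho\colon P\to\Delta_{R/I}$, supplies it cleanly: polynomial permutations lie in $P$ (the backward implication is exactly where injectivity of $\bar f$, hence finiteness of $R/I$, enters), so the group they generate lies in $P$, and restricting $\rho$ gives a homomorphism whose image is generated by the $\bar f$ and so lands in $\Pgr(R/I)$. This care is not pedantic in context: the paper's own use of this proposition is for $R=\Z_p$, where the fact that polynomial permutations of $\Z_p$ form a group is \emph{deduced} afterwards from this proposition together with the inverse-limit result, so it cannot be presupposed here. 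Your closing observation that $\Pgr(R/I)$ coincides with the monoid of polynomial permutations of $R/I$ (a submonoid of a finite group is a subgroup) is also correct and confirms the codomain is the intended one.
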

\begin{proof}
    Let $f\in R[x]$ be a permutation polynomial. Then its reduction modulo $I$ defines a function on $R/I$ which is surjective and hence injective because $R/I$ is a finite set.
\end{proof}
In particular, $\Pgr(-)$ is a functor from the category of finite rings and surjective morphisms to the category of finite groups. As an example use of this theorem, write 
\begin{align*}
\Z_p = \varprojlim_k \Z/p^k
\end{align*}
for the $p$-adic integers. Applying Proposition~\ref{thm:redFunctor} and Proposition~\ref{thm:projlimits}, we see that we have an isomorphism
\begin{align*}
    \Pgr(\Z_p) \cong \varprojlim_k \Pgr(\Z/p^k)
\end{align*}
showing that the monoid of polynomial permutations of $\Z_p$ is actually a group. Moreover, we can endow $\Pgr(\Z_p)$ with the topology coming from this inverse limit of finite groups, making $\Pgr(\Z_p)$ into a profinite group, though we will leave an analysis of this $p$-adic situation for a future paper. 

\begin{rem}Here is a question inspired by Proposition~\ref{thm:redFunctor} for which we do not yet have a good answer. Let $I$ be an ideal of a ring $R$ such that $R/I$ is finite. When is
    \begin{align*}\Pgr(R)\longrightarrow\Pgr(R/I)
    \end{align*}
    surjective? This is a natural question, because when it is surjective, then $\Pgr(R/I)$ would be a quotient of $\Pgr(R)$. It is certainly not always surjective, such as for $\Z\to \Z/p$ when $p$ is a prime. Even when it is surjective, lifts of permutation polynomials in $R/I[x]$ may not necessarily be permutation polynomials in $R[x]$, as in the case of $\Z/2[u]/u^2\to\Z/2$ where the polynomial $x^2\in \Z/2[u]/u^2[x]$ reduces to a permutation polynomial in $\Z/2[x]$ but does not induce a permutation of $\Z/2[u]/u^2$.\end{rem}

\section{Quotient Rings of the Integers}\label{sec:quotientIntegers}
In this section we take a look at $\Pgr(\Z/n)$. When writing permutations of $\Z/n$, we will use cycle notation with the elements labeled as $0,1,\dots,n-1$. We have already remarked that $\Pgr(R) = \Delta_R$ when $R$ is a finite field. 
\begin{prop}
     Let $n$ be squarefree with $n = p_1p_2\cdots p_k$ for distinct primes $p_1,\dots,p_k$. Then
\begin{align*}
    \Pgr(\Z/n)\cong \Delta_{p_1}\times\cdots \times\Delta_{p_k}
\end{align*}
Furthermore, if $n > 6$ then $\Pgr(\Z/n)$ is a proper subgroup of $\Delta_n$.
\end{prop}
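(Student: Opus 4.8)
The plan is to reduce the first assertion to two facts already in hand: the product formula of Proposition~\ref{thm:behaviourUnderProducts} and the identification $\Pgr(\Z/p)=\Delta_p$ for $p$ prime. First I would apply the Chinese Remainder Theorem, which uses squarefreeness of $n$ to guarantee pairwise coprime factors, to obtain a ring isomorphism $\Z/n\cong\Z/p_1\times\cdots\times\Z/p_k$. Since $\Pgr(-)$ is built solely from the polynomial functions on a ring, any ring isomorphism carries polynomial permutations to polynomial permutations and hence induces a group isomorphism on polypermutation groups; thus $\Pgr(\Z/n)\cong\Pgr(\Z/p_1\times\cdots\times\Z/p_k)$.

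Next I would apply Proposition~\ref{thm:behaviourUnderProducts} inductively in $k$ to obtain $\Pgr(\Z/p_1\times\cdots\times\Z/p_k)\cong\Pgr(\Z/p_1)\times\cdots\times\Pgr(\Z/p_k)$. Each $\Z/p_i$ is a finite field, so every function on it is polynomial and $\Pgr(\Z/p_i)=\Delta_{p_i}$. Composing the isomorphisms yields $\Pgr(\Z/n)\cong\Delta_{p_1}\times\cdots\times\Delta_{p_k}$, the first claim.

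For properness I would argue by cardinality, noting that $\Pgr(\Z/n)$ is by definition a subgroup of $\Delta_{\Z/n}=\Delta_n$. The isomorphism above gives $|\Pgr(\Z/n)|=\prod_{i=1}^k p_i!$ while $|\Delta_n|=n!=(\prod_{i=1}^k p_i)!$, so it suffices to prove the strict inequality $\prod_{i=1}^k p_i!<(\prod_{i=1}^k p_i)!$. I would establish this in two steps. First, $\prod_i p_i!$ divides $(\sum_i p_i)!$, by iterating the elementary fact that $a!\,b!$ divides $(a+b)!$ with quotient $\binom{a+b}{a}$; already the first step contributes a factor $\binom{p_1+p_2}{p_1}\geq 2$ when $k\geq 2$. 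Second, $(\sum_i p_i)!$ divides $n!$ because $\sum_i p_i\leq\prod_i p_i=n$, which follows by induction from $(a-1)(b-1)\geq 1$ for $a,b\geq 2$. Combining, $\prod_i p_i!$ divides $n!$ with quotient strictly larger than $1$ as soon as $k\geq 2$, giving the strict inequality.

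The point I would flag as the real content — and the only place where a hypothesis on $n$ matters — is the distinction between $k=1$ and $k\geq 2$. When $n$ is prime ($k=1$) one has $\Pgr(\Z/n)=\Delta_n$, so the subgroup is \emph{not} proper; properness is genuinely a statement about composite squarefree $n$. Every composite squarefree $n$ has $k\geq 2$ and hence $n\geq 6$, and the inequality above is strict for all of them, so the conclusion holds throughout this range, with the stated bound $n>6$ sitting safely inside it. I expect essentially all of the work to be the short factorial inequality, the structural isomorphism being immediate from the cited results.
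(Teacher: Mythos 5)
Your argument is correct and, for the isomorphism, is exactly the paper's: the paper's entire proof is the single sentence ``This follows immediately from Proposition~\ref{thm:behaviourUnderProducts}'', which silently packages your Chinese Remainder Theorem step, the induction on the number of factors, and the fact (recorded earlier in the paper) that $\Pgr(R)=\Delta_R$ for a finite field $R$. Where you go beyond the paper is the ``furthermore'' clause: the paper offers no argument at all for properness, and your factorial inequality --- $\prod_i p_i!$ divides $\bigl(\sum_i p_i\bigr)!$ with quotient at least $\binom{p_1+p_2}{p_1}\geq 2$ once $k\geq 2$, while $\sum_i p_i\leq \prod_i p_i=n$ --- is the right way to supply one. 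The point you flagged is also well taken, and it is a defect of the statement rather than a gap in your proof: for prime $n>6$, say $n=7$, one has $\Pgr(\Z/n)=\Delta_n$, so the clause as literally written is false; properness holds precisely when $k\geq 2$, i.e.\ when $n$ is composite, and in that case it already holds at $n=6$, where $|\Pgr(\Z/6)|=2!\cdot 3!=12<720=6!$. So the hypothesis ``$n>6$'' is simultaneously too weak (it admits primes) and stronger than necessary (it needlessly excludes $6$); the clean statement, which is what your argument actually establishes, is that $\Pgr(\Z/n)$ is a proper subgroup of $\Delta_n$ for every squarefree $n$ with at least two prime factors.
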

\begin{proof}
    This follows immediately from Proposition~\ref{thm:behaviourUnderProducts}.
\end{proof}

We now consider $\Z/p^k$ where $k > 1$ and $p$ is a prime number. We start with some examples that will elucidate a method that works in principle to compute $\Pgr(R)$ for any finite ring $R$.
\begin{prop}\label{thm:pgrofzmodefour}
    The polypermutation group of $\Z/4$ is
    \begin{align*}
        \Pgr(\Z/4) \cong D_4,
    \end{align*}
    and is generated by $(0,1,2,3)$ and $(1,3)$.
\end{prop}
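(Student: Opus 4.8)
The plan is to directly enumerate the polynomial permutations of $\Z/4$ and identify the subgroup they generate. Since $\Z/4$ has only four elements, $\Delta_{\Z/4}\cong\Delta_4$ has order $24$, so I am looking for a subgroup, and the claim is that it has order $8$ and is dihedral.

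First I would reduce the search using the earlier propositions. By Proposition~\ref{thm:noConstantGenerator}, once I confirm every translation $x+r$ is a permutation polynomial (which is clear, since $x+r$ is a bijection for any $r$), I may restrict attention to permutation polynomials with no constant term together with the translations. Over $\Z/4$ the translations $x$, $x+1$, $x+2$, $x+3$ generate a cyclic group of order $4$, namely $\langle(0,1,2,3)\rangle$, so $(0,1,2,3)\in\Pgr(\Z/4)$. Next I would look for a single additional generator. A natural candidate is a quadratic: I would check that $x^2+x$ (or an appropriate monomial-type polynomial) induces a permutation of $\Z/4$ and compute its cycle structure, aiming to produce a transposition such as $(1,3)$ after composing with a suitable translation as in Proposition~\ref{thm:noConstantGenerator}. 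Verifying these are permutations is a finite check on four inputs.

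The substantive step is the upper bound: I must show no further permutations arise, i.e. that $\Pgr(\Z/4)$ has order exactly $8$ and is not larger. For this I would argue that every polynomial function $\Z/4\to\Z/4$ reduces, as a function, to one coming from a polynomial of bounded degree, since higher powers of $x$ are eventually redundant as \emph{functions} on the finite ring $\Z/4$. Concretely, because $x^2$, $x^3$, $x^4,\dots$ take only finitely many values and satisfy functional relations on $\Z/4$ (for instance $x^2$ and $x^4$ agree as functions since $0,1$ are the only squares), I can list a finite spanning set of polynomial functions and determine exactly which of the $24$ elements of $\Delta_4$ are polynomial permutations. Tabulating the values of $a_0+a_1x+a_2x^2+a_3x^3$ with $a_i\in\Z/4$ and selecting those that are bijective yields the full monoid of polynomial permutations; I then read off the generated subgroup.

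The main obstacle is purely bookkeeping: correctly determining the set of \emph{distinct functions} induced by polynomials on $\Z/4$ and checking bijectivity, without overcounting polynomials that induce the same function. I expect to find that the polynomial permutations are exactly the $8$ elements of a dihedral group $D_4\leq\Delta_4$, realized as the symmetries of the $4$-cycle $(0,1,2,3)$, with the reflection supplied by $(1,3)$; the final task is then to confirm that $(0,1,2,3)$ and $(1,3)$ satisfy the dihedral relations and generate a group of order $8$, which is immediate since $(1,3)(0,1,2,3)(1,3)=(0,3,2,1)$ is the inverse rotation.
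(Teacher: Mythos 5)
Your proof is correct, but at the decisive step (the upper bound) it takes a genuinely different route from the paper's. The paper also starts with $x+1$ giving $(0,1,2,3)$ and exhibits $(1,3)$ explicitly (via $x^4+x^2+x$), but it never enumerates polynomial functions: instead it argues that any hypothetical \emph{extra} generator may be chosen with zero constant term by Proposition~\ref{thm:noConstantGenerator}, and such a permutation satisfies $f(0)=0$ and $f(2)\equiv 2a_1 \in\{0,2\}\pmod 4$, so it preserves the set $\{0,2\}$ and hence lies in the Klein four-group $\{e,(0,2),(1,3),(0,2)(1,3)\}$, which is already contained in $\langle(0,1,2,3),(1,3)\rangle$ since $(0,1,2,3)^2=(0,2)(1,3)$; thus no new generator is needed. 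Your route instead bounds $\pfb(\Z/4)\leq 3$ (via $x^4=x^2$ as functions) and then tabulates all $a_0+a_1x+a_2x^2+a_3x^3$, selecting the bijections; this is precisely the general-purpose $\pfb$ method that the paper formalizes immediately after this proposition and deploys for $\Z/8$ in Example~\ref{exmp:zm8}. Your way is heavier bookkeeping ($4^4$ polynomials, before identifying redundant functions), but it buys strictly more information: it shows the \emph{monoid} of polynomial permutations is already the full group $D_4$, whereas the paper's invariance trick only controls the generated group. One concrete slip to fix: $x^2+x=x(x+1)$ takes only even values on $\Z/4$, so it is not a permutation; your hedge and the subsequent enumeration would recover from this (for instance $3x$, or $x+2x^2$, or the paper's $x^4+x^2+x$ induces $(1,3)$), but the candidate as written fails.
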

\begin{proof}
    The permutation $(0,1,2,3)$ can be given by the polynomial function $f(x) = x + 1$, and the permutation $(1,3)$ can be given by $f(x) = x^4 + x^2 + x$. Since $(0,1,2,3)^2 = (0,2)(1,3)$, we see that $\Pgr(\Z/4)$ contains $(0,2)$ and $(1,3)$. Now suppose $\Pgr(\Z/4)$ is not generated by $(0,1,2,3)$ and $(1,3)$. Then we need at least one more generator for $\Pgr(\Z/4)$, which we can choose by Proposition~\ref{thm:noConstantGenerator} to have no constant term. But then this generator would leave the set $\{0,2\}$ invariant, and so it would be in the subgroup $\{ e, (0,2), (1,3), (0,2)(1,3)\}$, and hence we do not need a new generator after all.
\end{proof}
Over the finite field $\F_q$, every function $\F_q\to\F_q$ can be represented by a polynomial of degree strictly less than $q$. For other finite rings, as shown by the computation of Proposition~\ref{thm:pgrofzmodefour}, there are some set endomorphisms that cannot be represented by a polynomial of any degree. Nonetheless, there are only finitely many set endomorphisms of a finite ring.
\begin{defn}
    For a finite ring $R$, we define the polynomial function bound on $R$ to be the least upper bound of the set of all $d$ such that every polynomial function $R\to R$ can be represented by a polynomial of degree at most $d$, and we write $\pfb(R)$ for this number.
\end{defn}
We can always get an upper bound for $\pfb(R)$ by computing the largest integer $d$ such that the polynomial functions $x,x^2,\dots,x^d$ are all distinct; then $\pfb(R) \leq d$. For example, by this method we see that $\pfb(\Z/9)\leq 7$ and $\pfb(\Z/27)\leq 20$. Since there are only finitely many ring structures on a finite set of a given cardinality, one should be able to express this bound in terms of this cardinality. Using this number, we can compute $\Pgr(R)$ for any finite ring. Since this may be the only method for some finite rings, we illustrate it with an example, using Sage to avoid lengthy hand-computations.
\begin{exmp}\label{exmp:zm8}We have
    \begin{align*}
        \Pgr(\Z/8)\cong (\Z/2)^4\rtimes D_4.
    \end{align*}
    Indeed, we first compute powers of elements in $\Z/8$, which gives us $\pfb(\Z/8)\leq 4$. We need three permutations to generate $\Pgr(\Z/8)$: the permutation $(0, 1, 2, 3, 4, 5, 6, 7)$ given by $f(x) = x + 1$, the permutaiton $(1, 3, 5, 7)(2,6)$ given by $f(x) = x^4 + x^2 + x$, and the permutation $(1,5)$ given by $f(x) = x^4 + x^2 + 3x$.

    These permutations generate the group $\Pgr(\Z/8)$ which has order~128. It has a normal subgroup isomorphic to $(\Z/2)^4$ fitting into an exact sequence
    \begin{align*}
        1\to (\Z/2)^4\to \Pgr(\Z/8)\to D_4\to 1.
    \end{align*}
    The subgroup isomorphic to $(\Z/2)^4$ can be generated by the set $\{ (3,7), (2,6), (1,5), (0,4)\}$, and the quotient $D_4$ has coset representatives
    \begin{align*}
        \{ e, (1,3)(5,7), (0,1)(2,3)(4,5)(6,7), (0,1,2,3)(4,5,6,7), (0,2)(4,6),\\
        (0,2)(1,3)(4,6)(5,7), (0,3,2,1)(4,7,6,5), (0,3)(1,2)(4,7)(5,6) \}\subseteq H.
    \end{align*}
    The dihedral group $D_4$ also has the presentation
    \begin{align*}
        D_4 = \langle~ r,s ~|~ r^4, s^2, r^ks = sr^{-k} ~\rangle
    \end{align*}
    and an isomorphism to the quotient of $\Pgr(\Z/8)$ is given by
    \begin{align*}
        r &\longmapsto (0,1,2,3)(4,5,6,7)\\
        s &\longmapsto (1,3)(5,7)
    \end{align*}
    Similarly, an embedding of $(\Z/2)^4$ into $\Pgr(\Z/8)$ is given by
    \begin{align*}
        (1,0,0,0) &\longmapsto (2,6)\\
        (0,1,0,0) &\longmapsto (3,7)\\
        (0,0,1,0) &\longmapsto (0,4)\\
        (0,0,0,1) &\longmapsto (1,5).
    \end{align*}
    The intuition is to think of a square whose vertices are labeled by $(2,6), (3,7), (0,4)$ and $(1,5)$ around going either clockwise or counterclockwise. Using these isomorphisms, the action of $D_4$ on $(\Z/2)^4$ is given on generators by
    \begin{align*}
        r*(a,b,c,d) &= (d,a,b,c)\\
        s*(a,b,c,d) &= (a,c,b,d)
    \end{align*}
and it gives an explicit isomorphism
\begin{align*}
    \Pgr(\Z/8)\cong (\Z/2)^4\rtimes D_4.
\end{align*}
\end{exmp}
These techniques can be used for any finite ring but for larger cardinalities, the computations quickly become prohibitive. Next, we will derive a few results necessary to compute the cardinality of $\Pgr(\Z/p^k)$. We first note that if $f\in \Z/p^k[x]$, not necessarily a permutation polynomial, then
\begin{align}\label{eqn:extenderRule}
    f(x + mp) = f(x) + mpf'(x) + (mp)^2f''(x) + \cdots + (mp)^{k-1}f^{(k-1)}(x)
\end{align}
for all $x\in \Z/p^k$ and where $f'$ denotes the formal derivative of $f$. Therefore, $f$ is actually determined by a choice of $0,1,\dots,p-1$ and a choice of derivatives $f^{(i)}(0),\dots,f^{(i)}(p-1)$ for $i=0,\dots,k-1$. Can any such choice be represented by a polynomial? This is the content of a theorem of Carlitz.
\begin{thm2}[{\cite[Theorem 3]{Carlitz1964}}]
    A function $f:\Z/p^k\to\Z/p^k$ can be represented by a polynomial in $\Z/p^k[x]$ if and only if
    \begin{align*}
        f(x + mp) = f_0(x) + mpf_1(x) + \cdots + (mp)^{k-1}f_{k-1}(x)
    \end{align*}
    for all $m$ and $x = 0,\dots,p-1$ where each $f_i:\Z/p\to \Z/p^k$ is an arbitrary function.
\end{thm2}
So any function $f:\Z/p^k\to\Z/p^k$ obtained by choosing $f^{(i)}(0),\dots,f^{(i)}(p-1)$ for $i = 0,\dots,k-1$ and extending by Equation~\eqref{eqn:extenderRule} can also be defined by a polynomial in $\Z/p^k[x]$.
\begin{prop}
    A function $f:\Z/p^k\to\Z/p^k$ obtained by the method just described is a permutation of $\Z/p^k$ if and only if $f(0),\dots,f(p-1)$ are all distinct modulo $p$ and $f'(0),\dots,f'(p-1)\in (\Z/p^k)^\times$.
\end{prop}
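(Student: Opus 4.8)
The plan is to exploit the fact that every element of $\Z/p^k$ has a unique expression $x + mp$ with $x\in\{0,\dots,p-1\}$ and $m\in\{0,\dots,p^{k-1}-1\}$, so that reduction modulo $p$ partitions $\Z/p^k$ into $p$ fibres, each of size $p^{k-1}$. Since Equation~\eqref{eqn:extenderRule} gives $f(x+mp)\equiv f(x)\pmod p$, the function $f$ descends to a map $\bar f\colon \Z/p\to\Z/p$, $\bar f(x)=f(x)\bmod p$, and $f$ carries the fibre over $x$ into the fibre over $\bar f(x)$. First I would note that if $f$ is a permutation then the commuting relation $\pi\circ f=\bar f\circ\pi$ (with $\pi$ the reduction) forces $\bar f$ to be surjective, hence a permutation of $\Z/p$; this is exactly the requirement that $f(0),\dots,f(p-1)$ be distinct modulo $p$. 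Conversely, once $\bar f$ is a bijection it permutes the fibres, so $f$ is a permutation of $\Z/p^k$ if and only if, for each $x$, the restriction $g_x(m)=f(x+mp)$ is a bijection of the fibre over $x$ onto the fibre over $\bar f(x)$. This reduces the entire statement to a single claim about one fibre: $g_x$ is bijective if and only if $f'(x)$ is a unit.

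For the sufficiency direction I would prove, by induction on $1\le j\le k$, that the assignment $m\bmod p^{j-1}\mapsto g_x(m)\bmod p^j$ is a well-defined bijection from $\Z/p^{j-1}$ onto the set of residues congruent to $f(x)$ modulo $p$ inside $\Z/p^j$. The base case $j=1$ is immediate. For the inductive step the crux is to expand, via Equation~\eqref{eqn:extenderRule}, the difference $g_x(m+p^{j-1}s)-g_x(m)$ modulo $p^{j+1}$ by applying the binomial theorem to each $(mp+p^j s)^i$; every term with $i\ge 2$ contributes a multiple of $p^{j+1}$, so only the linear term survives and the difference is $\equiv p^j s\,f'(x)\pmod{p^{j+1}}$. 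Because $f'(x)$ is a unit, this vanishes modulo $p^{j+1}$ precisely when $p\mid s$, which yields both well-definedness (dependence only on $m\bmod p^{j-1}$) and injectivity; injectivity between finite sets of equal size gives the bijection. Taking $j=k$ shows $g_x$ is a bijection of the fibre.

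For necessity I would argue by contraposition: if $f'(x_0)\equiv 0\pmod p$ for some $x_0$, then the linear term $mp\,f'(x_0)$ is $\equiv 0\pmod{p^2}$ and every higher term already carries $p^2$, so Equation~\eqref{eqn:extenderRule} gives $g_{x_0}(m)\equiv f(x_0)\pmod{p^2}$ for all $m$. Hence $g_{x_0}$ sends its $p^{k-1}$ inputs into the $p^{k-2}$ residues congruent to $f(x_0)$ modulo $p^2$, so by pigeonhole it fails to be injective once $k\ge 2$, and therefore $f$ is not a permutation.

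I expect the main obstacle to be the bookkeeping in the inductive step, where one must verify simultaneously that $g_x(m)\bmod p^j$ depends only on $m\bmod p^{j-1}$ and that distinct classes modulo $p^j$ give distinct values modulo $p^{j+1}$; the whole argument hinges on isolating the single surviving term $p^j s\,f'(x)$ from the binomial expansion. Everything else — the reduction to fibres and the final counting contradiction — is formal once this Hensel-type computation is in hand.
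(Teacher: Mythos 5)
Your proof is correct, and its framing — splitting $\Z/p^k$ into the fibres of reduction modulo $p$, observing that distinctness of $f(0),\dots,f(p-1)$ modulo $p$ is exactly bijectivity of the induced map on $\Z/p$, and reducing everything to bijectivity of each fibre map $g_x(m)=f(x+mp)$ — is the same reduction the paper makes (in a single sentence). Where you genuinely diverge is the single-fibre claim. The paper argues by collision: if $f(x+m_1p)=f(x+m_2p)$, subtracting the two instances of Equation~\eqref{eqn:extenderRule} gives $0=p(m_1-m_2)f'(x)+\cdots+p^{k-1}(m_1^{k-1}-m_2^{k-1})f^{(k-1)}(x)$, and it bootstraps the valuation of $m_1-m_2$ by reducing modulo $p^2,p^3,\dots$ in turn, controlling the higher terms via the factorization $(m_1-m_2)\mid(m_1^\ell-m_2^\ell)$, until $p^{k-1}\mid m_1-m_2$ forces $m_1=m_2$. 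Your Hensel-style induction on precision, with key identity $g_x(m+p^{j-1}s)-g_x(m)\equiv p^js\,f'(x)\pmod{p^{j+1}}$, runs on the same engine (the linear term dominates because $f'(x)$ is a unit while the binomial terms carry $p^{i+j-1}$ for $i\geq 2$), but packaged as lifting bijections level by level rather than as a collision analysis. Your packaging costs more bookkeeping — note the index slip in your parenthetical: at the step producing the level-$(j+1)$ map, well-definedness means $g_x(m)\bmod p^{j+1}$ depends only on $m \bmod p^{j}$ — but it buys the stronger intermediate statement that $f$ induces a bijection modulo $p^j$ for every $j\leq k$, which is precisely the ``fractal'' property the paper singles out in its closing remark. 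Your necessity step also differs mildly: the paper exhibits the explicit collision $f(x+p^{k-1})=f(x)$ by taking $m=p^{k-2}$, whereas you trap the whole fibre inside the $p^{k-2}$ residues congruent to $f(x_0)$ modulo $p^2$ and apply pigeonhole; both rest on the same vanishing of the linear term when $f'(x_0)\in(p)$.
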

\begin{proof}
    Suppose there exists an $x\in \{0,\dots,p-1\}$ such that $f'(x) \in (p)$. Choosing $m = p^{k-2}$, we see that
    \begin{align*}
        f(x + mp) = f(x) + p^{k-1}f'(x) = f(x) \in \Z/p^k.
    \end{align*}
    Therefore, the conditions: $f(0),\dots,f(p-1)$ are all distinct modulo $p$ and $f'(0),\dots,f'(p-1)\in (\Z/p^k)^\times$ is certainly necessary for the corresponding function to be a permutation.
    
    Now we prove sufficiency. Since $f(x + mp)$ and $f(x)$ are the same modulo $p$, it suffices to fix an $x$ and show that $f(x), f(x + p), \dots, f(x + (p^{k-1}-1)p)$ are all distinct. Suppose not. Then there exists distinct $m_1,m_2\in 0,1,\dots,p^{k-1} - 1$ such that
    \begin{align*}
        f(x + m_1p) = f(x + m_2p).
    \end{align*}
    Then by Equation~\eqref{eqn:extenderRule}, we must have
    \begin{align*}
        0 = p(m_1 - m_2)f'(x) + p^2(m_1^2 - m_2^2)f''(x) + \cdots + p^{k-1}(m_1^{k-1} - m_2^{k-1})f^{(k-1)}(x).
    \end{align*}
    Reducing modulo $p^2$ we see that $m_1 - m_2\in (p)$. But then $m_1^2 - m_2^2\in (p)$ and so reducing modulo $p^3$ we see that $m_1 - m_2\in (p^2)$. Continuing along this fashion, using that $m_1^\ell - m_2^\ell$ has $m_1 - m_2$ as a factor, we can conclude that $m_1 - m_2\in (p^{k-1})$. But we have chosen $m_1,m_2\in \{0,\dots,p^{k-1}-1\}$, and so $m_1 = m_2$. Thus $f$ is indeed a permutation.
\end{proof}
Looking at Equation~\eqref{eqn:extenderRule} again, we see that to obtain any permutation it suffices to choose $f(0),\dots,f(p-1)$, exactly one from each coset of the ideal $(p)$ in $\Z/p^k$, an ordering of these cosets, and for $x = 0,1,\dots,p$ the elements $f'(x)\in (\Z/p^{k-1})^\times$, and $f^{(\ell)}(x) \in \Z/p^{k-\ell}$ for $\ell > 1$. It is easy to see that this gives
\begin{align*}
    p!(p^{k-1})^p[p^{k-2}(p-1)]^p[p^{k-2}p^{k-3}\cdots p]^p = p![(p-1)p^{(k^2+k-4)/2}]^p
\end{align*}
many choices. Moreover, we consider all of these choices to be elements of $\Z/p^k$ through the set inclusion (not ring homomorphism!) $\Z/p^\ell\to\Z/p^k$  defined by $n\mapsto n$ for $\ell \leq k$; this is to avoid writing the more cumbersome $0,1,\dots,p^\ell\in \Z/p^k$. Do different choices necessarily lead to different permutations? Not necessarily. We have to impose one additional condition for this to be so and this is the content of the next result.
\begin{thm2}\label{thm:uniqueCount}
    Let $p$ be a prime and $k \geq 2$ be an integer such that $p \geq k$. Then
    \begin{align*}
        |\Pgr(\Z/p^k)| = p![(p-1)p^{(k^2+k-4)/2}]^p.
    \end{align*}
\end{thm2}
\begin{proof}
    Consider two permutations $f$ and $g$ defined by the aforementioned choices of $f^{(i)}(x)$ for $i = 0,1,\dots,k-1$ and $x = 0,1,\dots,p-1$. Let us suppose that $f$ and $g$ induce the same permutation on $\Z/p^k$. To prove the theorem we must show that $f^{(i)}(x) = g^{(i)}(x)$ for all $i = 0,\dots,k-1$ and all $x = 0,1,\dots,p-1$ or equivalently, that $d_i = f^{(i)}(x) - g^{(i)}(x) \in \Z/p^k$ is zero for all $i$.
    
    Since $f$ and $g$ are supposed to be the same, the difference of Equation~\eqref{eqn:extenderRule} for $f$ and the analogue for $g$ gives the identity in $\Z/p^k$:
    \begin{align*}
        0 = mpd_1 + (mp)^2d_2 + \cdots + (mp)^{k-1}d_{k-1}
    \end{align*}
    that holds for all $m$. Let $m_1,\dots,m_{k-1}$ be arbitrary. For each $m_i$, we obtain an identity, all of which collectively can be expressed in matrix notation:
    \begin{align}\label{eqn:pfmatrixId}
    0 = 
    \begin{bmatrix}
        m_1 & m_1^2 & \cdots & m_1^{k-1}\\
        m_2 & m_2^2 & \cdots & m_2^{k-1}\\
        \vdots & \vdots & \ddots & \vdots\\
        m_{k-1} & m_{k-1}^2 & \cdots & m_{k-1}^{k-1}
    \end{bmatrix}
    \begin{bmatrix}
        pd_1\\
        p^2d_2\\
        \vdots\\
        p^{k-1}d_{k-1}.
    \end{bmatrix}
\end{align}
The determinant of the matrix with $i,j$-entry $m_i^j$ is just a variant of the Vandermonde matrix; its determinant is
\begin{align*}
    \prod_i m_i \prod_{i > j} (m_i - m_j).
\end{align*}
The identity in~\eqref{eqn:pfmatrixId} shows that this determinant annihilates $p^\ell d_\ell$ in the ring $\Z/p^k$. Because we have assumed that $p \geq k$, we can choose $m_1,\dots,m_{k-1}$ in the set $\{1,2,\dots,p-1\}$ all \emph{distinct}, and so consequently $p^\ell d_\ell = 0$. But $d_\ell\in \{0,1,2,\dots,p^{k-\ell} - 1\}$ and hence $d_\ell = 0$.
\end{proof}
\begin{rem}
    The same proof idea does not seem to work for $p < k$ because the determinant annihilating $p^\ell d_\ell$ will only give a lower bound in this case. In fact, we recall Example~\ref{exmp:zm8} that $|\Pgr(\Z/2^3)| = 2^7$, whereas putting $p=2$ and $k=3$ in Theorem~\ref{thm:uniqueCount} gives the number $2^9$. 
\end{rem}

\begin{thm2}
    Let $p$ be a prime and let the group $(\Z/p)^\times$ act on the group $\Z/p$ by multiplication. Let $\Delta_p$ act on the $p$-fold products $[(\Z/p^)\times]^p$ and $(\Z/p)^p$ via permuting the coordinates. Then there exists an isomorphism
    \begin{align*}
        \Pgr(\Z/p^2)\cong ( (\Z/p)^p\rtimes [(\Z/p)^\times]^p)\rtimes \Delta_p
    \end{align*}
\end{thm2}
\begin{proof}
    For a polynomial permutation $f$ on $\Z/p^k$, let $\sigma_f$ be the permutation that $f$ induces on $\Z/p$. By our previous discussion, to give $f$ is the same thing as to give $\sigma_f$, elements $a_0,\dots,a_{p-1}\in \Z/p$, and elements $f_0,\dots,f_{p-1}\in(\Z/p)^\times$, which defines $f$ by the conditions that
    \begin{equation}\label{eqn:detforms}
        \begin{split}
        f(x) &= \sigma_f(x) + a_xp \\
        f(x + mp) &= f(x) + mpf_x
    \end{split}
    \end{equation}
    for $x = 0,\dots,p-1$ and $m$ arbitrary. Then it follows that $f(x + mp) = f(x) + mpf_y$ where $y\in \{ 0,\dots,p-1\}$ and $y\equiv x\pmod{p}$. Consider the map:
    \begin{align*}
        \Pgr(\Z/p^2)&\longrightarrow( (\Z/p)^p\rtimes [(\Z/p)^\times]^p)\rtimes \Delta_p\\
        f&\longmapsto \bigl( (a_0,\dots,a_{p-1}), (f_0,\dots,f_{p-1}), \sigma_f \bigr)
    \end{align*}
    Theorem~\ref{thm:uniqueCount} shows that this map is a bijection. To show that it is a homomorphism we compute the product of two elements in the iterated semidirect product: Suppose $g$ is another polynomial permutation defined by constants $b_i\in \Z/p, g_i\in (\Z/p)^\times$, and $\sigma_g$. Then
    \begin{align*}
        [ (a_i), (f_i), \sigma_f ][ (b_i), (g_i), \sigma_g ] &= ( \sigma_f*( (b_i), (g_i) ) + (a_i, f_i), \sigma_g\circ\sigma_f )\\
        &= [ ( (b_{\sigma_f(i)}), (g_{\sigma_f(i)}))( (a_i), (f_i) ), \sigma_g\circ\sigma_f]\\
        &= [ ( b_{\sigma_f(i)} + g_{\sigma_f(i)}a_i), (g_{\sigma_f(i)}f_i), \sigma_g\circ\sigma_f ].
    \end{align*}
    On the other hand, by directly using the formulas in~\eqref{eqn:detforms}, we see that:
\begin{align*}
    (g\circ f)(i) &= g(\sigma_f(i) + a_ip)\\
    &= (\sigma_g\circ\sigma_f)(i) + p(b_{\sigma_f(i)} + a_ig_{\sigma_f(i)}).
\end{align*}
and $(g\circ f)(i + mp) = (g\circ f)(i) + mpf_ig_{\sigma_f(i)}$.
\end{proof}
The same proof will not work for $k > 2$; the problem is that the structure of the group $\Pgr(\Z/p^k)$ is more complicated and it is not clear to the author if there is any nice presentation of it. Nonetheless, we emphasize that with Theorem~\ref{thm:uniqueCount}, it is possible to write a fairly fast algorithm that will determine all the generators of $\Pgr(\Z/p^k)$ for any $k$ as a subgroup of $\Delta_{p^k}$. 

\begin{rem}A polynomial permutation of $\Z/p^k$ also induces a permutation of $\Z/p^\ell$ for $\ell=1,\dots,k$ by Proposition~\ref{thm:redFunctor}. Inspired by this fact, it is tempting to introduce the following definition: Let $R$ be a commutative ring and $I$ an ideal of $R$. We say that a permutation of $R/I^k$ is an \df{$I$-\permterm permutation} of $R/I^k$ if it induces permutations of $R/I^\ell$ for $\ell=1,\dots,k$. If the ideal is understood, we simply say \df{\permterm permutation}. Let us write $\Fpg_I(R/I^k)$ for the group of $I$-fractal permutations of $R/I^k$. 
    
    As we have said, permutation polynomials in $\Z/p^k[x]$ induce \permterm permutations of $\Z/p^k$. However, the converse is false in general! Indeed, the following fractal permutation of $\Z/27$ is not given by any polynomial:
\begin{align*}
    (0,5)(1,13,7,10,4,25)(2,15,8,3,11,24,17,21,20,6,26,12)(9,14,18,23)(16,19,22)
\end{align*}
In fact $\Pgr(\Z/p^k)$ is usually a proper subgroup of $\Fpg_p(\Z/p^k)$. Now, using the notion of fractal permutation, we can define the $I$-fractal permutation group of $R$, or the fractal permutation group of the pair $(R,I)$ as the limit
\begin{align*}
    \Fpg_I(R) := \varprojlim_k \Fpg_I(R/I^k).
\end{align*}
The structure and meaning of this group are still mysterious, but we leave this for future research.
\end{rem}

\bibliographystyle{unsrt}

\end{document}